\newtheorem{theorem}{Theorem}[section]
\newtheorem{lemma}[theorem]{Lemma}
\theoremstyle{definition}
\newtheorem{definition}[theorem]{Definition}
\theoremstyle{remark}
\newtheorem{remark}[theorem]{Remark}
\newtheorem{example}[theorem]{Example}
\newcommand{\C}{\mathscr{C}}
\newcommand{\map}{\mathrm{map}}
\renewcommand{\L}{\mathcal{L}}
\newcommand{\R}{\mathcal{R}}
\renewcommand{\S}{\mathscr{S}}
\newcommand{\Ac}{\mathscr{A}}
\newcommand{\Coac}{\mathscr{B}}
\newcommand{\Z}{\mathbb{Z}}
\begin{document}
\title{Some characterizations of acyclic maps}
\author{George Raptis}
\address{\newline
G. Raptis \newline
Fakult\"{a}t f\"ur Mathematik, Universit\"{a}t Regensburg, D-93040 Regensburg, Germany}
\email{georgios.raptis@ur.de}

\begin{abstract}
We discuss two categorical characterizations of the class of acyclic maps between (path-connected) spaces. The first one 
is in terms of the higher categorical notion of an epimorphism. The second one employs the notion of a balanced map, that 
is, a map whose homotopy pullbacks define also homotopy pushouts. We also identify the modality in the homotopy theory of 
spaces that is defined by the class of acyclic maps, and discuss the content of the generalized Blakers-Massey theorem for this modality. 
\end{abstract}

\maketitle

\section{Introduction and Statement of Results}

One of the most enigmatic elementary constructions in homotopy theory is Quillen's plus construction. 
For each based path-connected space $X$ there is a based acyclic map $q_X \colon X \to X^+$ such that the 
kernel of $\pi_1(q_X)$ is exactly the maximal perfect subgroup of $\pi_1(X)$. Such a construction first 
appeared for homology spheres in \cite{Ker} and it was later developed by Quillen \cite{Qui} 
who used it for the purpose of defining higher algebraic $K$-theory of rings. The construction easily
generalizes as follows: for each perfect normal subgroup $P \unlhd \pi_1(X)$,  there is an acyclic map 
$q_{X, P} \colon X \to X^+_P$ such that the kernel of $\pi_1(q_{X, P})$ is $P$. The map $q_{X, P}$ is universal 
with respect to this property and therefore also unique up to homotopy.

\smallskip 

We recall that a map of path-connected spaces $f \colon X \to Y$ is \emph{acyclic} if its homotopy fiber 
$F_f$ is acyclic, i.e., $\widetilde{H}_*(F_f; \mathbb{Z}) = 0$. Acyclic maps are also characterized by the following 
propertes (see \cite{Ber, HH}):
\begin{itemize}
 \item[(H)] the map $f$ induces isomorphisms $H_*(X; f^*A) \cong H_*(Y; A)$ for all local abelian 
 coefficient systems $A$ on $Y$.
 \item[(+)] the map $f$ is identified up to weak homotopy equivalence with the plus construction $q_{X, P} \colon X \to X^+_P$
 with respect to the kernel $P$ of $\pi_1(f,x)$ -- which is a perfect group.
\end{itemize}
More generally, we say that a map $f \colon X \to Y$ of (not necessarily path-connected) spaces is \emph{acyclic} if the homotopy fibers of $f$ are acyclic spaces (equivalently, 
if (H) is satisfied). An acyclic map is therefore a homology equivalence in a strong sense, but it can be very far from inducing a $\pi_*$-isomorphism 
in general. For example, the theorem of Kan and Thurston \cite{KT} shows that for any path-connected space $X$, 
there is a discrete group $G$ and an acyclic map $BG \to X$. Note that acyclic maps are closed under homotopy pullbacks 
(by definition) and under homotopy pushouts (by (H)). Property (+) connects the acyclic maps with the plus construction 
and therefore with algebraic $K$-theory.  We refer to \cite{Ber} for a nice treatment of the approach to algebraic $K$-theory based on the plus construction.  

\medskip

This paper makes no special claim to originality. Its purpose is twofold, firstly, to prove two further characterizations of the class of acyclic maps, and secondly, 
to discuss the properties of the modality in the homotopy theory of spaces that is defined by the class of acyclic maps. The first characterization 
involves the notion of an epimorphism as suggested by higher category 
theory. In the homotopy theory ($\infty$-category) of spaces, this notion corresponds to the following: a map $f \colon X \to Y$ is called a (\emph{homotopy}) \emph{epimorphism} if the 
commutative square 
\begin{equation} \label{epi-pushout}
\xymatrix{
X \ar[d]_f \ar[r]^f & Y \ar[d]^{\text{id}} \\
Y \ar[r]^{\text{id}} & Y 
}
\end{equation}
is a homotopy pushout. This is the categorical dual of the notion of monomorphism as defined in higher category theory.
(See \cite{Lur} for a discussion of this notion in general $\infty$-categories.) The first characterization states that 
a map is acyclic if and only if it is an epimorphism. 

\smallskip

The second characterization is motivated by another well known special property of acyclic maps. Given an acyclic 
map  between path-connected spaces $f \colon X \to Y$, then the homotopy fiber sequence 
\begin{equation} \label{fiber=cofiber}
F_f \to X \xrightarrow{f} Y
\end{equation}
is also a homotopy cofiber sequence (see \cite{HH}). Indeed, the induced map from the homotopy cofiber $C_{F_f \to X} \to Y$ is acyclic 
(by excision) and induces a $\pi_1$-isomorphism (by the van Kampen theorem). However, it is easy to see that this property cannot characterize 
the acyclic maps (the trivial map $X \to \ast$ always has this property, too). We consider the following stronger property: a map between path-connected 
spaces $f \colon X \to Y$ is called \emph{balanced} if for every map $g \colon B \to Y$ where $B$ is non-empty, the homotopy pullback square 
\begin{equation} \label{balanced-square}
\xymatrix{
E \ar[d] \ar[r] & X \ar[d]^{f} \\
B \ar[r]^{g} & Y 
}
\end{equation}
is also a homotopy pushout. The second characterization states that a map of path-connected spaces is acyclic if and only if it is balanced. This characterization 
follows easily from the results of Alonso in \cite{Alo} -- where general criteria for a homotopy pullback of (path-connected) spaces to be a homotopy pushout are shown. 
We prove these two characterizations of acyclic maps, as epimorphisms and as balanced maps, in Section 2. 

\medskip

The notion of a \emph{modality} was used by Anel-Biedermann-Finster-Joyal \cite{ABFJ} in connection with generalizations of the classical Blakers-Massey theorem. The prototypical example of a modality 
is the factorization system in the $\infty$-topos of spaces which is defined by the classes of $n$-connected and $n$-truncated maps. It is shown in \cite{ABFJ} that there is a 
Blakers-Massey theorem and a dual Blakers-Massey theorem associated to each modality. These theorems specialize to the classical statements in the case of the aforementioned 
modality in spaces. 

In Section 3, we give a proof that the class of acyclic maps defines the left class of a modality in spaces and identify the associated right class as the class of maps 
$f \colon X \to Y$ such that $\mathrm{ker}\big(\pi_1(f,x)\big)$ does not contain a non-trivial perfect subgroup for each $x \in X$. We also apply the main results of \cite{ABFJ} 
and comment on the content of the corresponding Blakers-Massey theorems for this example of a modality. 

\medskip

\noindent \emph{Acknowledgements.} I thank J\'er\^{o}me Scherer for pointing 
out to me the work of Alonso \cite{Alo}, and Georg Biedermann for suggesting 
the connection with the notion of a modality and the results of \cite{ABFJ}. I also thank Mathieu Anel and Eric Finster 
for interesting discussions about the properties of modalities. This work is partially supported by \emph{SFB 1085 - Higher Invariants} 
(University of Regensburg) funded by the DFG.

\section{Epimorphisms, acyclic maps, and balanced maps}

In this section we prove two characterizations of acyclic maps, as epimorphisms in the homotopy theory of spaces, and as balanced maps.
The characterization of acyclic maps as balanced maps is essentially contained in the work of Alonso \cite{Alo} and our proof is closely 
related to arguments used in that paper (see \cite[Theorem 2.5 and Corollary 2.9]{Alo}). See also \cite[Proposition 4.2]{Alo} for some 
further characterizations of acyclic maps. 

\begin{theorem} \label{main}
Let $f \colon X \to Y$ be a map of spaces.
\begin{enumerate}
\item[(a)] $f$ is acyclic if and only if $f$ is an epimorphism.
\item[(b)] Suppose that $X$ and $Y$ are path-connected. Then $f$ is acyclic if and only if $f$ is balanced.
\end{enumerate}
\end{theorem}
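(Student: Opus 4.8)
The plan is to reduce both parts to a fiberwise computation over $Y$. I will work in the $\infty$-category of spaces $\mathcal{S}$ and use that, for any space $Y$, the forgetful functor $\mathcal{S}_{/Y}\to\mathcal{S}$ creates colimits, while straightening gives an equivalence $\mathcal{S}_{/Y}\simeq\mathrm{Fun}(Y,\mathcal{S})$ under which an object $(Z\to Y)$ corresponds to the functor $y\mapsto Z_y$ (the homotopy fiber over $y$), with all limits and colimits computed pointwise. The fact I want to record first is: a commutative square whose four corners carry compatible maps to $Y$ is a homotopy pushout if and only if, for each $y\in Y$, the induced square of homotopy fibers is a homotopy pushout of spaces. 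Throughout, $F$ denotes the homotopy fiber of $f$.

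For (a), I would view the square \eqref{epi-pushout} as a square over $Y$, the lower-right corner being the terminal object $Y\xrightarrow{\mathrm{id}}Y$ and the other three corners mapping to $Y$ by $f$, $\mathrm{id}$, $\mathrm{id}$. Over a point $y\in Y$ it becomes the span $\ast\leftarrow F_y\to\ast$, whose homotopy pushout is the unreduced suspension $\Sigma F_y=\ast\sqcup_{F_y}\ast$. Hence $f$ is an epimorphism if and only if $\Sigma F_y\simeq\ast$ for every $y$. It then remains to check that $\Sigma F_y\simeq\ast$ if and only if $F_y$ is acyclic: the suspension isomorphism $\widetilde H_n(\Sigma F_y)\cong\widetilde H_{n-1}(F_y)$ gives $\widetilde H_\ast(\Sigma F_y)=0\Leftrightarrow\widetilde H_\ast(F_y)=0$; an acyclic $F_y$ is in particular nonempty and connected, so $\Sigma F_y$ is simply connected and the Hurewicz theorem promotes vanishing reduced homology to contractibility; conversely $\Sigma\emptyset=S^0$ is not contractible, so the condition also forces the fibers to be nonempty.

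For (b), I fix $g\colon B\to Y$ with $B$ nonempty and form the homotopy pullback \eqref{balanced-square}, all of whose corners map to $Y$. Writing $G_y$ for the homotopy fiber of $g$ over $y$, the pullback is fiberwise the span $F\leftarrow F\times G_y\to G_y$ of projections, whose homotopy pushout is the join $F\ast G_y$; thus the square is a homotopy pushout if and only if $F\ast G_y\simeq\ast$ for every $y$. Since $Y$ is path-connected and $B$ is nonempty, each $G_y$ is nonempty. For the implication acyclic $\Rightarrow$ balanced I use that, $F$ being acyclic and hence connected, the join $F\ast G_y$ is simply connected (by the connectivity estimate $\mathrm{conn}(F\ast G_y)\ge\mathrm{conn}(F)+\mathrm{conn}(G_y)+2\ge 1$) and has vanishing reduced homology (from the Mayer--Vietoris sequence of the join, as $\widetilde H_\ast(F)=0$), so it is contractible by Hurewicz. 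For balanced $\Rightarrow$ acyclic I specialize to $g\colon S^0\to Y$, whose homotopy fibers $G_y$ have at least two path components, so $\widetilde H_0(G_y)\ne 0$ is a nonzero free abelian group; then in the lowest degree $p$ with $\widetilde H_p(F)\ne 0$ the homology of a join gives $\widetilde H_{p+1}(F\ast G_y)\cong\widetilde H_p(F)\otimes\widetilde H_0(G_y)\ne 0$, contradicting $F\ast G_y\simeq\ast$, whence $F$ is acyclic.

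The step I expect to be the main obstacle is the initial reduction: presenting each square correctly as a diagram over $Y$ and justifying that the global homotopy-pushout condition is detected fiberwise, together with the clean identification of the fiberwise pushout as a suspension in (a) and as a join in (b). Once this is in place the remaining inputs are standard (the homology of suspensions and joins, the connectivity of joins, and the Hurewicz theorem). The only genuinely delicate choice in (b) is the test map: $S^0\to Y$ is well adapted precisely because path-connectedness of $Y$ forces its fibers to be nonempty and having two points forces them to be disconnected, so that the join detects $\widetilde H_\ast(F)$ in every degree. This argument is close in spirit to the criteria of Alonso \cite{Alo}.
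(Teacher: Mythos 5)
Your proposal is correct, but it follows a genuinely different and more uniform route than the paper's. Your single organizing tool is descent: the straightening equivalence $\S_{/Y}\simeq\mathrm{Fun}(Y,\S)$, under which colimits are computed fiberwise, so that a square over $Y$ is a homotopy pushout exactly when each induced square of homotopy fibers is. This reduces (a) to ``$\Sigma F_y$ is contractible iff $F_y$ is acyclic'' and (b) to ``$F\ast G_y$ is contractible,'' both settled by suspension/join homology plus Hurewicz. The paper instead argues globally in (a): epimorphism $\Rightarrow$ acyclic via Mayer--Vietoris and condition (H), and acyclic $\Rightarrow$ epimorphism by forming the homotopy pushout $C$ of $Y \xleftarrow{f} X \xrightarrow{f} Y$ and showing $c\colon Y\to C$ is acyclic (excision) and a $\pi_1$-isomorphism (van Kampen), hence an equivalence. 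In (b), for acyclic $\Rightarrow$ balanced the paper pastes pushout squares built from the fiber-sequence-is-cofiber-sequence property \eqref{fiber=cofiber}, and needs a separate argument (coproduct decomposition and iterated pushouts) when $B$ is disconnected; your fiberwise criterion absorbs disconnected $B$ with no extra work, since only nonemptiness of the fibers $G_y$ enters. For balanced $\Rightarrow$ acyclic the two arguments essentially coincide --- the paper also invokes descent at exactly this point --- except for the choice of test map: the paper's $Y\times S^0\to Y$ has homotopy fiber exactly $S^0$, so the join is literally $\Sigma F_f$ and acyclicity of $F_f$ follows at once from the suspension isomorphism, whereas your $S^0\to Y$ has fibers equivalent to $\Omega Y\sqcup\Omega Y$ and requires the lowest-nonvanishing-degree K\"unneth argument you give (which is fine, since the Tor terms vanish in that degree and $\widetilde H_0(G_y)$ is free abelian and nonzero). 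In short, your approach buys uniformity --- no local coefficient systems, no excision, no van Kampen, and one criterion serving both parts --- at the cost of leaning on the $\infty$-topos descent machinery throughout, which the paper deploys only where strictly needed.
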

\begin{proof} (a). Suppose that $f$ is an epimorphism. By the Mayer-Vietoris sequence 
of the homotopy pushout square \eqref{epi-pushout}, it follows that $f$ satisfies condition (H), 
and therefore it is acyclic. 

\medskip

\noindent Suppose that $f$ is acyclic. By restricting to path-connected components if necessary, we may 
assume that both $X$ and $Y$ are path-connected. Let $C$ be the homotopy pushout
$$
\xymatrix{
X \ar[d]_f \ar[r]^f & Y \ar[d] \\
Y \ar[r]_c & C 
}
$$
By excision, the map $c$ induces homology isomorphisms for all local abelian coefficient systems on $C$, and therefore $c$ is acyclic 
by condition (H). It is also a $\pi_1$-isomorphism by the van Kampen theorem. Therefore $c$ is a weak homotopy equivalence.

\medskip

\noindent (b). Suppose that $f$ is acyclic. Consider the following homotopy pullbacks 
$$
\xymatrix{
F \ar[d] \ar[r] & E \ar[d]^{f'} \ar[r] & X \ar[d]^{f} \\
\ast \ar[r] & B \ar[r] & Y 
}
$$
where $B$ is non-empty and path-connected. $F$ is the homotopy fiber of both $f$ and $f'$. Since $f$ and $f'$ are acyclic, the 
left and the composite squares are also homotopy pushouts - see the remarks after \eqref{fiber=cofiber} above. Therefore the right square is also 
a homotopy pushout.  This shows the claim in the case where $B$ is path-connected.

Let $B = \bigsqcup_I B_i$ where each $B_i$ is a path-connected component of $B \neq \varnothing$ (we may assume that $B$ is a CW complex). Then 
each homotopy pullback 
$$
\xymatrix{
E_i \ar[d] \ar[r]^{g'_i} & X \ar[d]^{f} \\
B_i \ar[r]^{g_i} & Y 
}
$$
is also a homotopy pushout since $f$ is acyclic and $B_i$ is path-connected.  Consider the diagram
$$
\xymatrix{
\bigsqcup_I E_i  \ar[d] \ar[rr]^{\bigsqcup_I g'_i} && \bigsqcup_I X  \ar[d]^{\bigsqcup_I f} \ar[r]^{\nabla} & X \ar[d]^f \\
B = \bigsqcup_I B_i \ar[rr]_{\bigsqcup_I g_i} && \bigsqcup_I Y \ar[r]_{\nabla} & Y
}
$$
By the definition of homotopy pullbacks, the composite square is a homotopy pullback. (This is an instance of homotopy descent in an $\infty$-topos \cite{Lur}.)

The left square is a homotopy pushout because it is so for each $i\in I$. Moreover, the right square is a homotopy pushout using the 
fact that $f$ is acyclic (or an epimorphism): the map $f$ is up to homotopy the composition of taking $|I| (>0)$ iterated homotopy pushouts 
along copies of the map $f$. Therefore the composite square is also a homotopy pushout and the claim in the general case follows. 

\medskip

\noindent Suppose that $f$ is balanced. Given a homotopy pullback 
\begin{equation} \label{pullback1}
\xymatrix{
E \ar[d]_{f'} \ar[r]^{g'} & X \ar[d]^{f} \\
B \ar[r]_g & Y 
}
\end{equation}
consider the square of homotopy fibers of the maps which start from each corner of the square \eqref{pullback1} and 
map to $Y$:
\begin{equation} \label{pullback2}
\xymatrix{
F_{gf'} \ar[d] \ar[r] & F_f \ar[d] \\
F_{g} \ar[r] & \ast
}
\end{equation}
This square is again a homotopy pullback by definition. Therefore $F_{gf'} \simeq F_{g} \times F_f$. 

Assuming that $B$ is non-empty, the first square \eqref{pullback1} is a homotopy pushout since $f$ is balanced. It follows by well known properties of the homotopy theory of spaces that the square of homotopy 
fibers \eqref{pullback2} is also a homotopy pushout. (This is an instance of homotopy descent in an $\infty$-topos \cite{Lur}.) Thus, the homotopy pushout 
$F_{g} \ast F_f$ of \eqref{pullback2} is weakly contractible for any $g \colon B \to Y$ with $B \neq \varnothing$. Setting, for example, $g \colon B = Y \times S^0 \to Y$ to be the projection, it follows that $F_f$ must have 
trivial reduced homology. This completes the proof of Theorem \ref{main}.  \end{proof}

\begin{remark}(On the connectivity assumption.)
For the obvious extension of the definition of a balanced map to non-path-connected spaces, we still have that a balanced map between (not necessarily path-connected) spaces is 
acyclic -- with the same proof as above applied to each homotopy fiber of $f$. But the converse fails for obvious reasons, even when the space $B$ is path-connected. An acyclic map 
between \emph{non}-path-connected spaces is balanced if and only if it is a weak homotopy equivalence.
\end{remark}

\begin{remark}
The $\infty$-category of spaces is well-copowered since quotient objects (in the sense of the notion of epimorphism used here) 
correspond to perfect normal subgroups of the fundamental group (at different basepoints).
\end{remark}

\begin{remark}
An acyclic map $f \colon X \to Y$ does \emph{not} have the property that each homotopy pushout square 
$$
\xymatrix{
X \ar[d]_f \ar[r] & C \ar[d] \\
Y \ar[r] & D
}
$$
is also a homotopy pullback. For example, given an acyclic space $F$, the homotopy pushout 
$$
\xymatrix{
F \ar[d] \ar[r] & \ast \ar[d] \\
\ast \ar[r] & \Sigma F \simeq \ast
}
$$
is not a homotopy pullback. We will return to this dual question in the next section.
\end{remark}

\begin{example} (Algebraic $K$-theory.)
Let $R$ be a unital ring and $BGL(R) \to BGL(R)^+$ the acyclic map to the path-connected cover of the algebraic $K$-theory of $R$. Then for each map 
$\varnothing \neq B \to BGL(R)^+$ the homotopy pullback 
$$
\xymatrix{
E \ar[d] \ar[r] & BGL(R) \ar[d] \\
B \ar[r] & BGL(R)^+ 
}
$$
is a homotopy pushout. For $B = \ast$, a model for the homotopy fiber $E$ is given by the (acyclic) \emph{Volodin space}. When $B = S^n$, then the 
homotopy pullback $E$ satisfies $E^+ \simeq S^n$. 
\end{example}

\begin{example}(Using the Kan-Thurston theorem.) Let $X$ be a path-connected space with universal covering space $p\colon \widetilde{X} \to X$. According to the Kan-Thurston theorem 
\cite{KT}, there is a discrete group $G$ and an acyclic (based) map $f \colon BG \to X$. We have a homotopy pullback square 
\begin{equation} \label{pullback3}
\xymatrix{
BP \ar[d] \ar[r] & BG \ar[d]^f \\
\widetilde{X} \ar[r]_p & X
}
\end{equation}
where $P$ is the kernel of $\pi_1(f)$ - a perfect normal subgroup of $G$. Then, by Theorem \ref{main}, the square \eqref{pullback3} is also a homotopy pushout. 
\end{example}

\section{The modality of acyclic maps}

\subsection{Preliminaries} For the presentation of the results in this section, it will be convenient to use the language of $\infty$-categories as in \cite{ABFJ}. Let $\S$ denote the $\infty$-category of spaces. Given $i \colon A \to B$ and $p \colon X \to Y$ in an $\infty$-category $\C$, we say that $i$ and $p$ are \emph{orthogonal} if 
the following square 
\[
\xymatrix{
\map_{\C}(B, X) \ar[r] \ar[d] & \map_{\C}(B, Y) \ar[d] \\
\map_{\C}(A, X) \ar[r] & \map_{\C}(A, Y)
}
\]
is a pullback in $\S$. In this case, we say that $i$ is \emph{left orthogonal} to $p$, or $p$ is \emph{right orthogonal} to $i$, and denote this relation by $i \perp p$. For 
a class of morphisms $A$ in an $\infty$-category $\C$, we write 
$$A^{\perp} \colon = \{p \in \C \ |\ a \perp p \ \textrm{for every} \ a \in A\}$$
$${}^{\perp}A \colon = \{i \in \C \ | \ i \perp a \ \textrm{for every} \ a \in A\}.$$

\begin{definition}
Let $\C$ be an $\infty$-category. A \emph{factorization system} in $\C$ consists of a pair of classes of morphisms $(\L, \R)$ in $\C$ such that:
\begin{itemize}
\item[(a)] every morphism $f$ in $\C$ admits a factorization $f = \R(f) \circ \L(f)$ where $\L(f) \in \L$ and $\R(f) \in \R$. 
\item[(b)] $\L^{\perp} = \R$ and $\L = {}^{\perp}\R$. 
\end{itemize}
We say that $\L$ is the \emph{left class} and $\R$ is the \emph{right class} of the 
factorization system. 
\end{definition}

\begin{definition} Let $\C$ be an $\infty$-topos. A factorization system $(\L, \R)$ in an $\infty$-topos $\C$ is a \emph{modality} if the class $\L$ is closed under pullbacks. 
\end{definition}

We refer to \cite[Section 3]{ABFJ} for a discussion of the general properties of factorization systems and modalities.

\subsection{Acyclic maps define a modality} Our purpose in this section is to give a proof that the class of acyclic maps in the $\infty$-topos $\S$ is the left class of a modality.
Let $\Ac$ denote the class of acyclic maps in $\S$. We recall that $f \colon X \to Y$ is acyclic if its homotopy fibers are acyclic spaces (equivalently, $f$ induces homology 
isomorphisms for all local abelian coefficient systems on $Y$). Furthermore, let $\Coac$ denote the class of maps $f \colon X \to Y$ in $\S$ such that for each 
$x \in X$ (= $x \colon 1 \to X)$, the normal subgroup 
$$\mathrm{ker}\big(\pi_1(f, x)\big) \unlhd\pi_1(X, x)$$
does \emph{not} contain a non-trivial perfect subgroup. Equivalently, $f \in \Coac$ if and only if for each $x \in X$ the (homotopy) fiber $F_{f,y}$ of $f$ at $y = f(x)$ has the property that  
its fundamental group $\pi_1(F_{f, y}, x)$ has trivial maximal perfect subgroup. (This uses the fact that the epimorphism $\pi_1(F_{f,y}, x) \twoheadrightarrow \mathrm{ker}\big(\pi_1(f,x)\big)$ defines a central extension, 
and therefore it preserves the maximal perfect subgroup \cite{Ber}.)

\begin{theorem} \label{main2}
The pair $(\Ac,\Coac)$ is a modality in $\S$. 
\end{theorem}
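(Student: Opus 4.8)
The plan is to verify the three conditions that make $(\Ac, \Coac)$ a modality: that it is an orthogonal factorization system, and that the left class $\Ac$ is closed under pullback. The last point is immediate, since homotopy fibers are preserved by base change and acyclicity is a condition on the homotopy fibers. For the factorization system I would use the standard criterion (as in \cite{Lur, ABFJ}): it suffices to exhibit, for every map, a factorization through a map in $\Ac$ followed by a map in $\Coac$, to prove the orthogonality relation $\Ac \perp \Coac$, and to note that both classes are closed under retracts; the equalities $\Ac^{\perp} = \Coac$ and ${}^{\perp}\Coac = \Ac$ then follow by the usual retract argument.

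For the factorization, the natural candidate is the plus construction. Working one path-component of $X$ at a time, let $f \colon X \to Y$ be a map of path-connected spaces, set $K = \ker(\pi_1(f,x))$, and let $P \unlhd K$ be its maximal perfect subgroup. Since $P$ is characteristic in $K$ and $K$ is normal in $\pi_1(X,x)$, the subgroup $P$ is normal in $\pi_1(X,x)$, so the plus construction $q_{X,P} \colon X \to X^+_P$ is defined and acyclic. As $P$ lies in the kernel of $\pi_1(f)$, its universal property factors $f$ as $X \xrightarrow{q_{X,P}} X^+_P \xrightarrow{g} Y$ with $\ker(\pi_1(g)) = K/P$. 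The point I would check here is that $K/P$ contains no non-trivial perfect subgroup, i.e.\ $g \in \Coac$: comparing the transfinite derived series of $K/P$ with that of $K$ (they satisfy $(K/P)^{(\alpha)} = K^{(\alpha)}/P$, using $P \subseteq K^{(\alpha)}$ for all $\alpha$) shows $(K/P)^{(\infty)} = P/P = 1$, so the perfect radical of $K/P$ is trivial. The general case is assembled componentwise.

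The orthogonality $\Ac \perp \Coac$ is the main obstacle, and I would reduce it to elementary group theory using the two characterizations already available. Let $a \in \Ac$ and $p \colon X \to Y$ in $\Coac$. By characterization (+) I may assume (componentwise) that $a = q_{A,P} \colon A \to A^+_P$ is a plus construction with $P$ perfect. Because $a$ is acyclic, it is an epimorphism by Theorem \ref{main}(a), so applying $\map(-, Z)$ to the pushout square \eqref{epi-pushout} for $a$ shows that $q_{A,P}^{*} \colon \map(A^+_P, Z) \to \map(A, Z)$ is a monomorphism --- an inclusion of path-components --- for every $Z$; and by the universal property of the plus construction its image consists exactly of the components of maps $h \colon A \to Z$ with $\pi_1(h)(P) = 1$. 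Taking $Z = X$ and $Z = Y$, the orthogonality square becomes a square of inclusions of components, and it is a homotopy pullback precisely when, for every $h \colon A \to X$, one has $\pi_1(h)(P) = 1$ if and only if $\pi_1(p \circ h)(P) = 1$.

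The forward implication is trivial, and the reverse is where the hypothesis $p \in \Coac$ enters: if $\pi_1(p \circ h)(P) = 1$, then $\pi_1(h)(P)$ is a subgroup of $\ker(\pi_1(p))$ which, being the image of the perfect group $P$, is itself perfect; since $p \in \Coac$ forbids non-trivial perfect subgroups of $\ker(\pi_1(p))$, it follows that $\pi_1(h)(P) = 1$. This establishes $\Ac \perp \Coac$, and together with the factorization above (and the routine closure of $\Ac$ and $\Coac$ under retracts, and of $\Ac$ under pullback) it completes the proof. I expect the only genuinely delicate points to be the homotopy-coherent form of the universal property of the plus construction and the bookkeeping of basepoints and path-components when $A$, $X$, $Y$ are not connected; the group-theoretic core is simply that images of perfect groups are perfect and that $\Coac$ is defined by the absence of such subgroups in the relevant kernels.
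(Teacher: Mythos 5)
Your proposal is correct and takes essentially the same route as the paper: the identical plus-construction factorization through the maximal perfect subgroup of $\mathrm{ker}\big(\pi_1(f,x)\big)$, the identical orthogonality argument via the fact that $q_{A,P}^*$ is a monomorphism of mapping spaces (the paper's Lemma \ref{auxiliary}, likewise deduced from Theorem \ref{main}(a)) combined with the observation that the image of a perfect group is perfect, and the standard retract argument to obtain the two orthogonality equalities. The only differences are presentational: you spell out the derived-series computation and the normality of $P$ in $\pi_1(X,x)$, and you cite the retract criterion abstractly where the paper writes it out.
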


We will need the following lemma. We first recall that a map $f \colon X \to Y$ in $\S$ is a \emph{monomorphism} if it is equivalent to an inclusion of components (equivalently, if the canonical map $\Delta f \colon X \to X \times_Y X$ is an equivalence). 

\begin{lemma} \label{auxiliary}
Let $X$ be a $0$-connected space, $P \unlhd \pi_1(X,x)$ a perfect normal subgroup,
and let $q_{X, P} \colon X \to X^+_P$ denote the associated plus construction. Then the restriction map 
$$q_{X,P}^* \colon \map_{\S}(X^+_P, Z) \rightarrow \map_{\S}(X, Z)$$
is a monomorphism in $\S$ for every $Z \in \S$. Moreover, 
$$q_{X,P}^* \colon \pi_0\big( \map_{\S}(X^+_P, Z)\big) \hookrightarrow \pi_0\big(\map_{\S}(X, Z)\big)$$
is identified with the inclusion of the classes of maps $g \colon X \to Z$ for which $\mathrm{ker}\big(\pi_1(g,x)\big)$ contains the subgroup $P$.  
\end{lemma}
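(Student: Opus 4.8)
The plan is to prove both claims of Lemma~\ref{auxiliary} by exploiting the universal property of the plus construction $q_{X,P}$ at the level of mapping spaces. First I would recall that the defining property of $q_{X,P} \colon X \to X^+_P$ is that a map $g \colon X \to Z$ factors through $q_{X,P}$ (uniquely up to contractible choice) if and only if $P \subseteq \mathrm{ker}\big(\pi_1(g,x)\big)$, since $q_{X,P}$ kills exactly $P$ on $\pi_1$ and is acyclic. The assertion that $q_{X,P}^*$ is a monomorphism is, by the recalled characterization of monomorphisms in $\S$, equivalent to the statement that the canonical map
\[
\map_{\S}(X^+_P, Z) \longrightarrow \map_{\S}(X^+_P, Z) \times_{\map_{\S}(X, Z)} \map_{\S}(X^+_P, Z)
\]
is an equivalence, i.e.\ that precomposition with $q_{X,P}$ is fully faithful on mapping spaces. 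So the technical heart is to show that $q_{X,P}$ induces an \emph{equivalence} onto the full subspace of $\map_{\S}(X,Z)$ spanned by those components where the map kills $P$.

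The cleanest route I would take is to use the epimorphism characterization from Theorem~\ref{main}(a): since $q_{X,P}$ is acyclic, it is an epimorphism, meaning the square \eqref{epi-pushout} with $f = q_{X,P}$ is a homotopy pushout. Applying the contravariant functor $\map_{\S}(-, Z)$ to a pushout square yields a pullback square in $\S$, so that
\[
\xymatrix{
\map_{\S}(X^+_P, Z) \ar[r]^{q^*} \ar[d]_{q^*} & \map_{\S}(X, Z) \ar[d]^{\mathrm{id}} \\
\map_{\S}(X, Z) \ar[r]^{\mathrm{id}} & \map_{\S}(X, Z)
}
\]
is a pullback, with both bottom and right maps the identity on $\map_{\S}(X,Z)$. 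But a pullback of this degenerate shape forces $q^* \colon \map_{\S}(X^+_P, Z) \to \map_{\S}(X,Z)$ to be an equivalence onto the equalizer of the two identity maps, which is precisely the condition that $q^*$ is a monomorphism. Indeed, the pullback of the identity against itself over the diagonal is exactly the diagonal $\Delta q^*$, so $q^*$ being a monomorphism is immediate from $q_{X,P}$ being an epimorphism. This is the conceptually satisfying step: the two notions are formally dual, so the monomorphism claim is essentially a restatement of epimorphism under the Yoneda-type pairing.

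For the second claim, I would identify the image on $\pi_0$. Applying $\pi_0$ to the monomorphism $q^*$ gives an injection $\pi_0 \map_{\S}(X^+_P, Z) \hookrightarrow \pi_0 \map_{\S}(X,Z)$; it remains to compute its image. A homotopy class $[g] \in \pi_0 \map_{\S}(X,Z)$ lies in the image precisely when $g$ factors through $q_{X,P}$ up to homotopy, and by the universal property of the plus construction this happens if and only if $P \subseteq \mathrm{ker}\big(\pi_1(g,x)\big)$. The main subtlety here, and the step I expect to require the most care, is verifying that the factorization condition is genuinely controlled by the single obstruction on $\pi_1$ and does not carry higher homotopical data: one must check that once $P$ is killed on $\pi_1$, the acyclicity of $q_{X,P}$ guarantees that $g$ extends over $X^+_P$ with a contractible space of extensions. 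I would handle this by the standard obstruction-theoretic argument for the plus construction, or equivalently by observing that $q_{X,P}$ is the $\Ac$-localization (nullification) in the relevant sense, so that the mapping space $\map_{\S}(X^+_P, Z)$ is the subspace of $q_{X,P}$-local maps, which is exactly the union of components where $P$ is killed. This completes the identification of the image and hence the lemma.
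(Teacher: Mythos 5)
Your strategy is the same as the paper's: deduce the monomorphism claim by feeding the epimorphism property of $q_{X,P}$ (Theorem \ref{main}(a)) into the contravariant functor $\map_{\S}(-,Z)$, and quote the universal property of the plus construction for the identification on $\pi_0$. However, the pullback square you display is dualized incorrectly, and as drawn it asserts something false. Applying $\map_{\S}(-,Z)$ to the pushout square \eqref{epi-pushout} must exchange initial and terminal corners: the pushout corner $X^+_P$ becomes the \emph{pullback} corner $\map_{\S}(X^+_P,Z)$, and the corner $X$ becomes the \emph{terminal} corner $\map_{\S}(X,Z)$. The correct pullback square is
$$
\xymatrix{
\map_{\S}(X^+_P, Z) \ar[r] \ar[d] & \map_{\S}(X^+_P, Z) \ar[d]^{q_{X,P}^*} \\
\map_{\S}(X^+_P, Z) \ar[r]_{q_{X,P}^*} & \map_{\S}(X, Z)
}
$$
(unlabelled maps are identities), which is exactly the square in the paper's proof; its pullback property says that the diagonal $\map_{\S}(X^+_P,Z) \to \map_{\S}(X^+_P,Z) \times_{\map_{\S}(X,Z)} \map_{\S}(X^+_P,Z)$ is an equivalence, i.e.\ that $q_{X,P}^*$ is a monomorphism. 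Your square instead places $\map_{\S}(X,Z)$ at three corners with identities on the bottom and right edges; if \emph{that} square were a pullback, then $q_{X,P}^*$ would be an equivalence, which is false in general (take $Z = X$ with $P$ non-trivial: the identity of $X$ does not kill $P$ on $\pi_1$, so it is not in the image of $q_{X,P}^*$ even on $\pi_0$). Your later sentence about ``the pullback of the identity against itself over the diagonal'' gestures at the correct statement, but it does not describe the diagram you wrote.

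Once the square is corrected, the rest of your argument is fine and coincides with the paper's: injectivity on $\pi_0$ follows from the monomorphism claim, and the identification of the image with the classes $g$ satisfying $P \subseteq \mathrm{ker}\big(\pi_1(g,x)\big)$ is precisely the universal property of the plus construction, which the paper also invokes (citing \cite{Ber}) rather than reproving. Your worry about higher homotopical data in the factorization is already discharged by the monomorphism statement itself: a monomorphism in $\S$ is an inclusion of path components, so once a class factors on $\pi_0$ there is no further obstruction to identify.
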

\begin{proof}
Since $X \to X^+_P$ is an epimorphism in $\S$, by Theorem \ref{main}(a), it follows that the square in $\S$
$$
\xymatrix{
\map_{\S}(X^+_P, Z) \ar[r] \ar[d] & \map_{\S}(X^+_P, Z) \ar[d] \\
\map_{\S}(X^+_P, Z) \ar[r] & \map_{\S}(X, Z)
}
$$
is a pullback. Therefore the first claim follows. The second claim is a well known consequence of the universal property of the plus construction (see, for example, \cite{Ber}).  
\end{proof} 

\medskip 

\noindent \textbf{Proof of Theorem \ref{main2}.}
First we show that the required factorizations exist. Let $f \colon X \to Y$ be a map of spaces. By restricting to components if necessary, we may assume that 
$X$ is $0$-connected, and we fix a basepoint $x \in X$. Let $P$ denote the maximal perfect subgroup of $\mathrm{ker}\big(\pi_1(f, x)\big)$. Using the universal property 
of the plus construction with respect to $P$, as a perfect normal subgroup of $\pi_1(X, x)$, we have a factorization in $\S$ as follows, 
\[
\xymatrix{
X \ar[dr]_i \ar[rr]^f && Y \\
& X^+_P \ar[ru]_p & 
}
\]
where $i \in \Ac$. Moreover, $p \in \Coac$ because there is an isomorphism 
$$\mathrm{ker}\big(\pi_1(p, i(x))\big) \cong \mathrm{ker}\big(\pi_1(f, x)\big)/P$$
and this group has no non-trivial perfect subgroups.  

Next we show that $\Coac \subseteq \Ac^{\perp}$ (or $\Ac \subseteq {}^{\perp} \Coac$). Consider a square in $\S$ 
$$
\xymatrix{
A \ar[r] \ar[d]_i & X \ar[d]^p \\
B \ar[r] & Y
}
$$
where $i \in \Ac$ and $p \in \Coac$. By restricting to components if necessary, we may assume that $i$ is a map between $0$-connected spaces. In this case, 
$B \simeq A^+_P$ for some perfect normal subgroup $P \unlhd \pi_1(A, a)$, for some chosen basepoint $a \in A$. We obtain the following diagram in $\S$
$$
\xymatrix{
\map(B, X) \ar[r]^(.6)c \ar[rd] & E \ar[r]^(.4){j} \ar[d] & \map(A, X) \ar[d] \\
& \map(B, Y) \ar[r]_{i^*} & \map(A, Y)
}
$$
where $E$ denotes the pullback. By Lemma \ref{auxiliary}, the maps $i^*$ and $j c$ are monomorphisms. Hence $j$, and as a consequence $c$, are also monomorphisms. Therefore it suffices to show that $c$ induces a bijection on $\pi_0$. By Lemma \ref{auxiliary}, we know that the subset
$$\pi_0(E) \hookrightarrow \pi_0 \big(\map(A, X)\big)$$ 
consists of those classes $g \colon A \to X$ such that $P$ is contained in $\mathrm{ker}\big(\pi_1(pg, a)\big)$. Since $p \in \Coac$, the image of the subgroup $P$ in $\pi_1(X, g(a))$ must be trivial, hence $P$ is actually 
contained in $\mathrm{ker}\big(\pi_1(g,a)\big)$. This identifies it with the 
subset $$\pi_0 \big(\map(B, X)\big) \hookrightarrow \pi_0 \big(\map(A, X)\big),$$ as was required to show. 

Now we prove that $\Ac^{\perp} \subseteq \Coac$. Suppose that $p \colon X \to Y$ is  in $\Ac^{\perp}$. Consider the factorization constructed above, $X \xrightarrow{i} \widetilde{X} \xrightarrow{q} Y$,
where $i \in \Ac$ and $q \in \Coac$. Then the square 
\[
\xymatrix{
X \ar@{=}[r] \ar[d]_i & X \ar[d]^p \\
\widetilde{X} \ar[r]_q \ar@{-->}[ru] & Y
}
\]
shows that $i$ admits a retraction, and therefore $i$ is an equivalence -- alternatively, note that $p$ is a retract of $q$. Lastly, we show that ${}^{\perp} \Coac \subseteq \Ac$. Suppose that $i\colon A \to  B$ is a map in ${}^{\perp} \Coac$. We consider again the factorization constructed above, $A \xrightarrow{j} \widetilde{B} \xrightarrow{p} B,$
where $j \in \Ac$ and $p \in \Coac$, and the lifting problem 
\[
\xymatrix{
A \ar[r]^j \ar[d]_i & \widetilde{B} \ar[d]^p \\
B \ar@{=}[r] \ar@{-->}[ru]^l & B.
}
\]
The lift $l$ shows that $i$ is a retract of the map $j$, hence it is also an acyclic map -- it also follows that $l$ must be an equivalence.  This completes the proof that $(\Ac, \Coac)$ is a factorization system. 
The factorization system $(\Ac, \Coac)$ defines a modality because acyclic maps are closed under pullbacks by definition. 
\qed

\medskip 

\begin{remark}
The modality of Theorem \ref{main2} can be regarded as an instance of the construction in \cite[Example 3.5.3]{ABFJ} for the nullification that defines the plus construction 
(see \cite{Tai, BC}). We emphasize that the modality of Theorem \ref{main2} is different from the factorization system that arises from the plus construction 
as a localization in the $\infty$-category of spaces and whose left class is the class of maps which become equivalences after plus construction. 
\end{remark}

\subsection{Blakers-Massey theorems for acyclic maps} As an application of the main results of \cite{ABFJ}, we obtain a Blakers-Massey theorem and a dual
Blakers-Massey theorem associated with the modality $(\Ac, \Coac)$ in $\S$. 

Given a map $f \colon X \to Y$ and $y \in Y$, we write $F_{f, y}$ for the (homotopy) fiber 
of $f$ at $y$. The Generalized Blakers-Massey theorem \cite[Theorem 4.1.1]{ABFJ} specialized to the modality $(\Ac, \Coac)$ gives the following statement (see also the comments in \cite[p. 30]{ABFJ}): Given a pushout square in $\S$
\[
\xymatrix{
A \ar[r]^g \ar[d]_f & C \ar[d] \\
B \ar[r] & D
}
\]
such that for every $a \in A$, the map 
\begin{equation} \label{auxiliary-2} 
F_{f, f(a)} \vee_a  F_{g, g(a)} \rightarrow F_{f, f(a)} \times F_{g, g(a)}
\end{equation} 
is acyclic, then the canonical map $A \to B \times_D C$ is acyclic.

\smallskip 

This statement can be reduced to a simpler statement as follows. First note that the 
fiber of the map \eqref{auxiliary-2} at the point $(a', a'')$ is given by the 
join 
$$\Omega_{a, a'} F_{f, f(a)} \ast \Omega_{a, a''} F_{g, g(a)}.$$
(Here $\Omega_{x,x'} X$, for $x ,x' \in X$, denotes the pullback of ($\ast \xrightarrow{x} X \xleftarrow{x'} \ast$).) Then it follows easily from the assumption that at most one of the fibers, $F_{f, f(a)}$ and $F_{g, g(a)}$, can fail to be $0$-connected, in which case the other fiber must be contractible -- and so the join is contractible as well. If both fibers are $0$-connected, then the following lemma applies.  

\begin{lemma}
Let $X$ and $Y$ be non-empty spaces in $\S$. Then $X \ast Y$ is contractible if and only if it is acyclic.
\end{lemma}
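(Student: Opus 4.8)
The plan is to prove the non-trivial direction: if $X \ast Y$ is acyclic then it is contractible (the converse is immediate, since contractible spaces have trivial reduced homology). The key observation is that a join of non-empty spaces is always simply connected, so an acyclic simply connected space is contractible by the homology Whitehead theorem.

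First I would recall the standard fact that for non-empty spaces $X$ and $Y$, the join $X \ast Y$ is $1$-connected. Indeed, $X \ast Y$ is the homotopy pushout of the span $X \xleftarrow{\mathrm{pr}_X} X \times Y \xrightarrow{\mathrm{pr}_Y} Y$, and since both $X$ and $Y$ are non-empty the two projections are surjective on $\pi_0$; a van Kampen argument then shows $X \ast Y$ is $0$-connected, and that $\pi_1(X \ast Y)$ is trivial because the amalgamation kills both factors. (More directly, one can use the suspension-type description: the join carries a natural filtration whose successive cofibers involve smash products, and the classical fact that $X \ast Y$ is $(m+n+2)$-connected when $X$ is $m$-connected and $Y$ is $n$-connected; with $m, n \geq -1$ this already gives $1$-connectivity.)

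Next, assuming $X \ast Y$ is acyclic, I would combine $1$-connectivity with the hypothesis $\widetilde{H}_*(X \ast Y; \mathbb{Z}) = 0$. A simply connected space with trivial reduced integral homology is weakly contractible: this is the homology Whitehead theorem (equivalently, the relative Hurewicz theorem applied inductively shows all homotopy groups vanish). Since acyclicity is precisely the condition $\widetilde{H}_*(X \ast Y; \mathbb{Z}) = 0$, the join is contractible.

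The main obstacle, such as it is, lies in the first step rather than the second: one must be careful that $X \ast Y$ is genuinely simply connected and not merely that its abelianized fundamental group vanishes, since acyclicity alone controls only homology. For a \emph{general} acyclic space simple connectivity can fail (acyclic spaces need not be contractible precisely because $\pi_1$ can be a non-trivial perfect group), so the whole content of the lemma is that the join construction forces $\pi_1$ to be trivial, after which homology determines the weak homotopy type. Once $1$-connectivity is established, the conclusion is a formal application of standard obstruction theory, so I would present the simple-connectivity of the join as the crucial input and treat the passage from acyclic to contractible as routine.
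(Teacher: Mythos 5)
Your second step (a simply connected acyclic space is weakly contractible, by Hurewicz/homology Whitehead) is fine and agrees with the paper, but your first step rests on a false claim, and that claim is precisely the crux of the lemma. The join of two non-empty spaces is \emph{not} in general simply connected: $S^0 \ast S^0 \simeq S^1$ has fundamental group $\mathbb{Z}$. The connectivity formula you invoke says the join of an $m$-connected space and an $n$-connected space is $(m+n+2)$-connected; with $m = n = -1$ (non-empty) this yields only $0$-connectivity, not $1$-connectivity, so your parenthetical ``with $m, n \geq -1$ this already gives $1$-connectivity'' is off by one. Your van Kampen sketch fails for the same reason: the amalgamation argument kills both factors only when $X \times Y$ is path-connected, i.e., when \emph{both} $X$ and $Y$ are path-connected; when both are disconnected, extra loops appear, exactly as in the groupoid van Kampen computation of $\pi_1(S^1)$.

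The paper closes this gap by using the acyclicity hypothesis \emph{before} asserting simple connectivity. The join formula in low degrees gives $H_1(X \ast Y; \mathbb{Z}) \cong \widetilde{H}_0(X; \mathbb{Z}) \otimes \widetilde{H}_0(Y; \mathbb{Z})$, and this vanishes by acyclicity; since both tensor factors are free abelian groups, one of them must be zero, i.e., at least one of $X$, $Y$ is $0$-connected. Only then does the connectivity estimate (or the iterated van Kampen argument of Milnor's Lemma 2.2, cited in the paper) give $0 + (-1) + 2 = 1$-connectivity of the join, after which your concluding step applies. So the missing idea is: use acyclicity in degree one, via the homology of the join, to force one factor to be path-connected; without that input, the simple connectivity you assert in step one is simply false, and your proof never actually uses the hypothesis where it is needed.
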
 
\begin{proof}
Suppose that $X \ast Y$ is acyclic. Since $\widetilde{H}_0(X; \Z) \otimes \widetilde{H}_0(Y; \Z) \cong H_1(X \ast Y; \Z) = 0,$ 
it follows that either $X$ or $Y$ is $0$-connected. Then, by the van Kampen theorem applied iteratively, $X \ast Y$ is also $1$-connected (see also \cite[Lemma 2.2]{Mil}). Hence $X \ast Y$ is contractible. 
\end{proof}

Thus, the Blakers-Massey theorem for acyclic maps as stated above, reduces to the 
Little Blakers-Massey theorem \cite[Corollary 4.1.4]{ABFJ} specialized to $\S$: this asserts that if \eqref{auxiliary-2} is an equivalence for each $a \in A$, then the 
pushout square is also a pullback. 
  
\begin{remark}
The join $X \ast Y$ of two (non-empty) spaces is acyclic if and only if there is a set of primes $P$ such that one of the homologies, $\widetilde{H}_*(X; \Z)$ and $\widetilde{H}_*(Y; \Z)$, 
is $P$-torsion and the other is uniquely $P$-divisible. See \cite[Theorem 2.5]{Alo}.
\end{remark}

\begin{example}
Let $(X,x)$ and $(Y, y)$ be based $0$-connected spaces such that the join $\Omega_x X \ast \Omega_y Y$ is acyclic/contractible.  Then an application of the Little Blakers-Massey theorem to the pushout square in $\S$
\[
\xymatrix{
X \times Y \ar[r] \ar[d] & X \ar[d] \\
Y \ar[r] & X \ast Y
}
\]
shows that the square is also a pullback. (Further comments: the Blakers-Massey theorem for acyclic maps yields that the canonical map $X \times Y \to X \times_{X \ast Y} Y$ is acyclic. But either $\Omega_x X$ or $\Omega_y Y$ must be $0$-connected. As a consequence, the canonical map to the pullback is $\pi_1$-injective and therefore it is an equivalence.)
\end{example}

\begin{example}
Let $(X, x)$ and $(Y,y)$ be based spaces such that $\pi_1(X, x)$ and $\pi_1(Y,y)$ are trivial, and the join $\Omega^2_x X \ast \Omega^2_y Y$ is acyclic/contractible. Then an application of the Little 
Blakers-Massey theorem to the pushout square 
\[
\xymatrix{
\ast \ar[r] \ar[d] & X \ar[d] \\
Y \ar[r] & X \vee Y
}
\]
shows that it is also a pullback. 
\end{example}

On the other hand, the dual Generalized Blakers-Massey theorem \cite[Theorem 3.6.1]{ABFJ} for the modality $(\Ac, \Coac)$ specializes to the following statement: Given a pullback square in $\S$
\begin{equation} \label{dual-BM}
\xymatrix{
A \ar[r] \ar[d] & C \ar[d]^f \\
B \ar[r]_g & D
}
\end{equation}
such that that for every $d, d' \in D$, the join $F_{f, d} \ast F_{g, d'}$ is acyclic, then it follows that the canonical map $B \cup_A C \to D$ is acyclic.

\smallskip 

\noindent For the proof, note that the fiber 
of this canonical map at $d \in D$ is exactly the join $F_{f, d} \ast F_{g, d}$ -- this observation was also used in the proof of Theorem \ref{main}(b). Note that the assumption on the join is satisfied 
when $f$ or $g$ is acyclic. 

\smallskip

In the case of $0$-connected spaces, this statement is also part of \cite[Theorem 2.5]{Alo} in which case it is shown that $B \cup_A C \to D$ is actually an equivalence. The same conclusion 
holds more generally when the maps $f$ and $g$ are $\pi_0$-surjective in \eqref{dual-BM}, so that their fibers are non-empty -- this is also an instance of the dual Little Blakers-Massey theorem. On the other hand, the pullback square 
$$
\xymatrix{
\varnothing \ar[r] \ar[d] & C \ar[d]^f \\
\varnothing \ar[r] & D
}
$$
where $f$ is acyclic shows that the conclusion about acyclicity cannot be improved in general.

\end{document}